\newcommand{\lb}{\varLambda}
\newcommand{\rf}[1]{(\ref{#1})}
\def \<{\langle}
\def \>{\rangle}
\newcommand{\bg}{\begin{equation}}
\newcommand{\ed}{\end{equation}}
\newcommand{\bga}{\begin{eqnarray}}
\newcommand{\eda}{\end{eqnarray}}
\def\cbdu{\par{\raggedleft$\Box$\par}}
\newtheorem {Theorem}  {Theorem}
\newtheorem {Corollary}[Theorem]{\bf Corollary}
\numberwithin{Theorem}{section}
\newtheorem {Lemma}[Theorem]  {Lemma}
\theoremstyle{definition}
\newtheorem{Definition}[Theorem]{Definition}
\theoremstyle{remark}
\newtheorem{Remark}[Theorem]{\bf Remark}
\def \l{\lambda}
\chardef\csname pre amssym.def
\def\undefine#1{\let#1\undefined}
\def\newsymbol#1#2#3#4#5{\let\next@\relax
 \ifnum#2=\@ne\let\next@\msafam@\else
 \ifnum#2=\tw@\let\next@\msbfam@\fi\fi
 \mathchardef#1="#3\next@#4#5}
\def\mathhexbox@#1#2#3{\relax
 \ifmmode\mathpalette{}{\m@th\mathchar"#1#2#3}%
 \else\leavevmode\hbox{$\m@th\mathchar"#1#2#3$}\fi}
\def\hexnumber@#1{\ifcase#1 0\or 1\or 2\or 3\or 4\or 5\or 6\or 7\or 8\or
 9\or A\or B\or C\or D\or E\or F\fi}
\font\teneufm=eufm10 \font\seveneufm=eufm7 \font\fiveeufm=eufm5
\newcounter{remark}
\newenvironment{remark}
{\medskip \stepcounter{remark} \noindent \textit{Remark
\arabic{section}.\arabic{remark}.}}{\rm \cbdu}
\def \grad {\nabla}
\def \ls {\lambda_q^{2s}}
\newcommand{\el}{\varepsilon}
\newcommand{\divv}{{\mathit div}\,}
\renewcommand{\d}{\delta}
\newcommand{\om}{\omega}
\renewcommand{\k}{\kappa}
\renewcommand{\l}{\lambda}
\renewcommand{\a}{\alpha}
\newcommand{\s}{\sigma}
\newcommand{\R}{\mathbf{R}}
\newcommand{\Ss}{{\mathcal S}}
\def  \R   {{\mathbb R}}
\def  \Z   {{\mathbb Z}}
\def  \N   {{\mathbb N}}
\def  \T   {{\mathbb T}}
\def  \haf  {{\frac{1}{2}}}
\def  \12  {{\frac{1}{2}}}
\def  \p   {\partial}
\def\build#1_#2^#3{\mathrel{\mathop{\kern 0pt#1}\limits_{#2}^{#3}}}
\begin{document}

\title[Determining Wavenumber for 3D NSE]{An optimal upper bound on the determining wavenumber for 3D Navier-Stokes Equations}

\author [Alexey Cheskidov]{Alexey Cheskidov}
\address{Institute for Theoretical Sciences, Westlake University, Hangzhou, China}
\email{cheskidov@westlake.edu.cn} 
\author [Qirui Peng]{Qirui Peng}
\address{Department of Mathematics, Stat. and Comp. Sci.,  University of Illinois Chicago, Chicago, IL 60607,USA}
\email{qpeng9@uic.edu}






\begin{abstract} We introduce a determining wavenumber for weak solutions of 3D Navier-Stokes equations whose time average is bounded by Kolmogorov dissipation wavenumber over the whole range of intermittency dimensions. This improves previous works \cite{CDK} and \cite{CD1}.

\bigskip

KEY WORDS: Navier-Stokes equations, determining modes.

\hspace{0.02cm}CLASSIFICATION CODE: 35Q35, 37L30.\\

\end{abstract}

\maketitle

\section{Introduction}

The Navier-Stokes equations (NSE) on a torus $\mathbb{T}^3=[0,L]^3$  are given by 
\begin{equation}
  \label{nse}
  \left\{
    \begin{array}{l}
      u_t + (u \cdot \grad) u - \nu\Delta u + \grad p = f \\
      \grad \cdot u = 0,
    \end{array}
  \right.
\end{equation}
where $u$ stands for the velocity, $p$ the pressure, $f$ the external force and $L > 0$ the size of the torus. Here $u$ and $f$ are assumed to be zero mean. 


In turbulent flows, the energy injected at forced low modes (large scales) cascades to small scales through the inertial range where viscous effects are negligible, and only dissipates above a certain mode, known as Kolmogorov’s dissipation wavenumber $\kappa_{\mathrm{d}}$. As predicted by Landau, the number of degrees of freedom of a three-dimensional turbulent fluid flow is expected to be of the order $\kappa_{\mathrm{d}}^3$. The aim of this note is to justify this prediction rigorously.

To define the number of degrees of freedom rigorously, the notion of determining modes was introduced by Foias and Prodi in \cite{FP} where they showed that high modes of
a solution to the 2D NSE are controlled by low modes asymptotically as time goes to infinity. In \cite{FMTT}, Foias, Manley, Temam, and Treve proveided an estimate for the number of determining modes. This was improved  later by Jones and Titi  \cite{JT}.
For more background, we refer the readers to  \cite{CFMT,CFT,FJKT,FJKTgeneral,FMRT,FT,FT84, FT-attractor,FTiti}.

For the 3D NSE the situation is more delicate as the global existence of regular solutions is not known. Nevertheless, the notion of determining modes can be generalized in the following sense: 
\begin{Definition}\label{def_determining_wavenumbers}
We call a family of functions $\Lambda_u:[0,\infty) \to \mathbb{R}_+$, assigned to each weak solution $u(t)$ of the 3D NSE, determining wavenumbers, if 
\[
P_{\leq \Lambda_u(t)}\big(u(t)-u(t)\big) = P_{\leq \Lambda_v(t)}\big(v(t)-v(t)\big)=0, \qquad \forall t >0
\]
implies
\[
u(t) -v(t) \to 0, \quad \text{as} \quad t \to \infty, 
\]
weakly in $L^2$.
\end{Definition}
Here $P_{\leq \Lambda}$ is a projection on frequencies below $\Lambda$. As was proved in \cite{CDK,CD1}, there exists a family of determining wavenumbers, such that for every weak solution satisfying the energy inequality starting from $t=0$,  an upper bound on time averages of $\Lambda_u$ holds for $t$ large enough (i.e., in an absorbing ball),
\[
\<\Lambda_u\>(t) :=  \frac{1}{T}\int_t^{t+T} \Lambda_u(\tau) \, d\tau \lesssim C_{\nu,f},
\]
for some constant $C_{\nu,f}$ that only depends on the viscosity coefficient $\nu$ and size of the force $f$, but not on the initial data (or a choice of the weak solution). The same bound folds for the time averages of $\Lambda_u(t)$ on the pullback global attractor. This indicates that the average number of degrees of freedom of the fluid flow is bounded by $C_{\nu,f}^{3}$.

To make a connection with Landau's prediction, it is desirable to compare $\<\Lambda_u\>$ with Kolmogorov's dissipation wave number defined as
\[
\kappa_\mathrm{d} := \left(\frac{\varepsilon }{\nu^3} \right)^{\frac{1}{d+1}}, 
\]
where $\varepsilon = \nu\l_0^d\<\|\nabla u\|_2^2\>$ is the average energy dissipation and $d\in[0,3]$ is the average intermittency dimension of the fluid flow, see Section~\ref{sec:Intermittency}.

In \cite{CDK}, the author defined a determining wavenumber with an upper bound which was physically optimal $ \<\lb \> - \frac{1}{L} \lesssim \kappa_d$ only in the case of extreme intermittency $d=0$ not observed in real turbulent flows. In a subsequent work 
\cite{CD1}, another definition of a determining wavenumeber was introduces, but only for trajectories with intermittency dimension $d \in [\d,3]$ for some a priori fixed $\d > 0$. The bound was physically optimal in this regime, but slightly deteriorated as $d \to 3$ and produced a logarithmic correction in Kolmogorov's regime $d=3$.

In this paper we show the existence of a family of determining number enjoying the following upper bound
\[
 \<\lb \> - \frac{1}{L} \lesssim \kappa_d \min \Big \{(\ln{(L\kappa_d)})^2, \frac{1}{d^2}\Big \}.
\]
Hence, for fluid flows with intermittency dimension $d$ above some absolute positive constant, the physically optimal bound is achieved 
\[
 \<\lb \> - \frac{1}{L} \lesssim \kappa_d.
\]
This is a physically relevant regime, as the intermittency dimension is observed to be close to $3$ for turbulent fluid flows.
Furthermore, we show that the above optimal bound holds for a very small intermittency dimension $d \in \big[ 0, (\ln{(L\kappa_d)})^{-1}\big] $ as well.

We define the determining wavenumber for the solution $u$ of the 3D NSE as follows: Firstly, denote the set $\Ss$ by
\begin{equation}\label{domain_of_index}
\Ss := \Big \{ (r,\d) \Big| \ r \in [2,\infty], \   \d \in [0,3],  \text{ and } (r,\d) \ne (\infty, 0)  \Big \}
\end{equation}
which stands for the domain of the tuple $(r,\d)$. Next, we introduce a wavenumber for each tuple $(r,\d) \in \Ss$
\begin{equation}\label{def_det_wave}
\begin{split}
\lb^{r,\delta}_{u}=\min\{\lambda_q: (L\lambda_{p-q})^{\frac3{r}-\frac12+\frac{\delta}{2}}\lambda_q^{-1+\frac3r}\|u_p\|_r<c_{r,\delta} \nu, \forall p>q, \\
\text{and} \ \ \lambda_q^{-2}\|\nabla u_{\leq q}\|_\infty<c_{r,
\delta}\nu,  q\in \N.\}
\end{split}
\end{equation}
where $c_{r,\delta}$ is an adimensional constant that depends only on $r$ and $\delta$. In fact, $c_{r,\delta} \to 0$ as $ (r,\delta) \to (\infty,0)$. The explicit form of $c_{r,\d}$ will be given by the end of Section $\ref{sec:pf}$. Additionally, $\lambda_q =\frac{2^q}{L}$, $L$ is the size of the torus, $u_{\leq q}= \sum_{p=-1}^q u_q$, and $u_q = \Delta_q u$ is the Littlewood-Paley projection of $u$ (see Section~\ref{sec:pre}). Note that a convention $\min \emptyset = \infty$ is adopted in the
definition of $\lb^{r,\d}_{u}(t)$. Finally, we choose $\lb_u (t)$ as the smallest wavenumbers among the family of the wavenumbers $\lb^{r,\d}_u(t)$, i.e.
\begin{equation}\label{def_uni_det_wave}
\lb_{u}(t):=\min_{(r,\d) \in \Ss}\lb^{r,\delta}_u(t)
\end{equation}

Now we are ready to state our main result, which states that $\lb_u(t)$ is a determining wavenumber. In fact, the proof shows that 

\begin{Theorem}\label{main_Theorem}
Let $u(t)$ and $v(t)$ be two weak solutions of the 3D Navier-Stokes equations.
Let $\lb(t):=\max\{\lb_{u}(t), \lb_{v}(t)\}$ and $Q(t)$ be such that $\lb(t)=\lambda_{Q(t)}$.
If
\begin{equation} \label{eq:dm-condition}
u(t)_{\leq Q(t)}=v(t)_{\leq Q(t)}, \qquad \forall t>0,
\end{equation}
then
\[
\lim_{t\to \infty} ||u(t)-v(t)||_{H^{-\frac{5}{4}}} = 0.
\]   
\end{Theorem}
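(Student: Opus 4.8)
The plan is to control the difference $w:=u-v$, which by hypothesis \eqref{eq:dm-condition} has vanishing Littlewood--Paley frequencies at and below $\lambda_{Q(t)}$, and to show that a negative Sobolev norm of $w$ decays exponentially because above the determining wavenumber the viscous dissipation dominates every nonlinear interaction. First I would subtract the two copies of \eqref{nse} and apply the Leray projection $\mathbb{P}$ to remove the pressure, obtaining
\begin{equation}\label{eq:diff}
\p_t w-\nu\Delta w+\mathbb{P}\big[(u\cdot\nabla)w+(w\cdot\nabla)v\big]=0,\qquad w_{\leq Q(t)}=0 .
\end{equation}
Since $u,v$ lie in the energy class $L^\infty_tL^2\cap L^2_tH^1$, the nonlinear term belongs to $L^{3/2}\hookrightarrow H^{-5/4}$ and the dissipative term to $H^{-1}\hookrightarrow H^{-5/4}$ for a.e.\ $t$, so testing \eqref{eq:diff} against $\Lambda^{-5/2}w$ is legitimate (after the standard mollification and Lions--Magenes justification) and yields
\begin{equation}\label{eq:energy}
\tfrac12\frac{d}{dt}\|w\|_{H^{-5/4}}^2=-\nu\|w\|_{H^{-1/4}}^2-\big\langle(u\cdot\nabla)w,w\big\rangle_{H^{-5/4}}-\big\langle(w\cdot\nabla)v,w\big\rangle_{H^{-5/4}} .
\end{equation}

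Because $w$ carries only frequencies above $\lambda_{Q(t)}$, the Littlewood--Paley characterization of the norms gives the spectral gap $\|w\|_{H^{-1/4}}^2\geq\lambda_{Q(t)}^2\|w\|_{H^{-5/4}}^2$, so the dissipation supplies a decay rate proportional to $\nu\lambda_{Q(t)}^2$, and the whole argument reduces to absorbing the two nonlinear terms into this gain. For each I would split the advecting/stretching field into its parts below and above $Q$. The contributions $(u_{\leq Q}\cdot\nabla)w$ and $(w\cdot\nabla)v_{\leq Q}$ are the low-frequency pieces: for the first, the divergence-free structure annihilates the principal part and leaves a commutator of $\Lambda^{-5/4}$ with multiplication by $u_{\leq Q}$, while for the second the smooth field $\nabla v_{\leq Q}$ acts as a bounded multiplier. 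In either case one is left with the factor $\|\nabla u_{\leq Q}\|_\infty$ (resp.\ $\|\nabla v_{\leq Q}\|_\infty$), which the second defining inequality in \eqref{det_wave} bounds by $c_{r,\d}\nu\lambda_Q^2$.

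The genuinely high-frequency pieces $(u_{>Q}\cdot\nabla)w$ and $(w\cdot\nabla)v_{>Q}$ are the heart of the matter: here I would invoke Bony's paraproduct decomposition together with Bernstein's inequality to reduce each term to a double sum over frequency shells of the quantities $\|u_p\|_r$, $\|v_p\|_r$ with $p>Q$, paired against the dyadic pieces of $w$ measured in $H^{-5/4}$. The exponents $\frac3r-\frac12+\frac\d2$ and $-1+\frac3r$ built into the first inequality of \eqref{det_wave} are exactly those that make each such sum geometric in $p-Q$ and bounded by $c_{r,\d}\nu\lambda_Q^2\|w\|_{H^{-5/4}}^2$. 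It matters that these inequalities actually hold at $\lambda_Q$ rather than only at $\lb_u^{r,\d}$: since $\lambda_Q=\max\{\lb_u,\lb_v\}\geq\lb_u^{r,\d}$ and the first inequality is monotone in the base index (raising $q$ multiplies each term by $\lambda^{-\frac12-\frac\d2}<1$), the defining bounds persist at $\lambda_Q$ for both $u$ and $v$, with possibly different minimizing tuples $(r,\d)$ --- harmless, since the transport term uses only the bounds for $u$ and the stretching term only those for $v$.

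Choosing the constants $c_{r,\d}$ small enough that the total nonlinear contribution is at most $\tfrac12\nu\lambda_Q^2\|w\|_{H^{-5/4}}^2$, \eqref{eq:energy} collapses to $\frac{d}{dt}\|w\|_{H^{-5/4}}^2\leq-\nu\lambda_{Q(t)}^2\|w\|_{H^{-5/4}}^2$. As $\lambda_{Q(t)}\gtrsim 1/L>0$ for every $t$, Gronwall's inequality gives $\|w(t)\|_{H^{-5/4}}^2\leq\|w(0)\|_{H^{-5/4}}^2\exp\!\big(-\nu\int_0^t\lambda_{Q(\tau)}^2\,d\tau\big)\to0$, which is the assertion. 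I expect the high-frequency nonlinear estimate to be the main obstacle, since one must organize the paraproduct sums so that the dyadic gains in $p-Q$ genuinely survive in the negative-order norm and match the precise exponents of \eqref{det_wave} uniformly over the minimizing $(r,\d)$; the low-frequency commutator against a negative-order operator is the second delicate point, as the cancellation that is exact in $L^2$ persists only modulo the term governed by $\|\nabla u_{\leq Q}\|_\infty$.
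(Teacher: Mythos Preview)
Your overall architecture matches the paper's: test the difference equation in a negative Sobolev norm, run Bony's paraproduct on the two bilinear terms, use the commutator to cancel the principal part of $u_{\leq Q}\cdot\nabla w$, invoke the second defining inequality of \eqref{det_wave} for the low--frequency pieces and the first for the high--frequency ones, absorb everything into the dissipation, and finish with Gronwall.  Your observation that the transport term needs only the bounds for $u$ and the stretching term only those for $v$ (so possibly different tuples) is also correct and matches what the paper implicitly uses.

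There is, however, a genuine gap in carrying out the estimates at the fixed exponent $s=-\tfrac54$.  Track the high--high piece (the paper's $I_3$ and $J_3$): after inserting $\|v_p\|_r\lesssim c_{r,\delta}\nu\,\lb^{1+\sigma}\lambda_p^{-\sigma-3/r}$ from \eqref{det_wave} and using Bernstein, the inner sum over $q\le p$ carries the weight $(L\lambda_{q-p})^{\,1+s+\sigma+3/r}$, and one needs $1+s+\sigma+\tfrac3r>0$ for it to be summable.  With $s=-\tfrac54$ this reads $\tfrac{\delta}{2}+\tfrac3r>\tfrac34$, which \emph{fails} on a large portion of $\mathcal{S}$ (for instance $r=\infty,\ \delta=1$, or any $\delta<\tfrac32-\tfrac6r$).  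This is exactly the obstacle you anticipated, and it is fatal at $s=-\tfrac54$.  The paper fixes it by letting the Sobolev index depend on the realising tuple: with $s=\min\{-\tfrac12+\tfrac{\delta}{4},0\}$ one gets $1+s+\sigma+\tfrac3r=\tfrac{3\delta}{4}+\tfrac3r>0$ for every $(r,\delta)\in\mathcal{S}$, and since this $s\ge -\tfrac12>-\tfrac54$ the $H^{-5/4}$ conclusion of the theorem then follows by embedding.  A smaller imprecision: the nonlinear terms are not naturally bounded by $c_{r,\delta}\nu\,\lambda_Q^{2}\|w\|_{H^{-5/4}}^2$ but by $c_{r,\delta}\nu\,\|w\|_{H^{s+1}}^2$, i.e.\ by the dissipation itself; the paper then closes with the crude Poincar\'e bound $\|w\|_{H^{s+1}}^2\ge\kappa_0^{2}\|w\|_{H^s}^2$ (with $\kappa_0=2\pi/L$), not the sharper $\lambda_Q^{2}$ gap you invoke.
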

\begin{remark}\label{rmk_tuple_value}
    Without loss of generality let $\lb(t) = \lb_u(t)$. Note that from the definition $(\ref{def_uni_det_wave})$ there exists $(r,\d)\in \Ss$ such that $\lb_u(t) = \lb^{r,\d}_u(t)$. The convergence in Theorem $\ref{main_Theorem}$ in fact holds for the $H^s$ norm with
    \begin{equation}\label{def_value_s}
        s := \min \Big \{-\frac{\d}{2} + \frac{1}{4}, 0 \Big \} \geq - \frac{5}{4}.
    \end{equation}
\end{remark}

\begin{Corollary} \label{main_Corollary}
    Let $u(t)$ and $v(t)$ as in Theorem $\ref{main_Theorem}$. If in addition, $u(t)$ and $v(t)$ are both Leray-Hopf solutions, then we have that 
    \[
   \lim_{t \to \infty} \<\|u - v \|^2_{H^{s}} \>(t) = 0
    \]
    for any $s < 1$.
\end{Corollary}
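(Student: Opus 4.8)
The plan is to upgrade the pointwise convergence supplied by Theorem \ref{main_Theorem} to a time-averaged convergence in the stronger topology $H^s$, $s<1$, by interpolating against the uniform $H^1$ bound that the Leray--Hopf energy inequality provides. Write $w:=u-v$. By Theorem \ref{main_Theorem} we already have $\|w(t)\|_{H^{-5/4}}\to 0$ as $t\to\infty$, and the whole point of the Leray--Hopf hypothesis is to give the extra integrated regularity needed to trade down to any negative loss.

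First I would record the two ingredients. Since $u,v$ are Leray--Hopf, the energy inequality bounds $\|u(t)\|_2$ and $\|v(t)\|_2$ uniformly once the trajectories enter an absorbing ball, so $\|w(t)\|_2$, and hence $\|w(t)\|_{H^{-5/4}}$, is eventually bounded; combined with $\|w(t)\|_{H^{-5/4}}\to 0$ this gives $\<\|w\|_{H^{-5/4}}^2\>(t)\to 0$ as $t\to\infty$. On the other hand, testing the energy inequality against $f$ and absorbing the gradient via $\langle f,u\rangle\le \tfrac{\nu}{2}\|\nabla u\|_2^2+\tfrac{1}{2\nu}\|f\|_{H^{-1}}^2$ yields $\nu\,\<\|\nabla u\|_2^2\>\lesssim \tfrac1\nu\|f\|_{H^{-1}}^2+O(1/T)$, and likewise for $v$. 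Together with the $L^2$ bound this produces a uniform bound $\<\|w\|_{H^1}^2\>(t)\le C$.

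Next, for any $s\in(-5/4,1)$ choose $\theta=(4s+5)/9\in(0,1)$ so that $s=-\tfrac54(1-\theta)+\theta$, and use the interpolation inequality $\|w\|_{H^s}\le \|w\|_{H^{-5/4}}^{1-\theta}\|w\|_{H^1}^{\theta}$. Squaring, integrating over $[t,t+T]$, and applying H\"older in time with the conjugate exponents $1/(1-\theta)$ and $1/\theta$ gives
\[
\<\|w\|_{H^s}^2\>(t)\ \le\ \<\|w\|_{H^{-5/4}}^2\>(t)^{\,1-\theta}\,\<\|w\|_{H^1}^2\>(t)^{\,\theta}.
\]
Letting $t\to\infty$, the first factor tends to $0$ while the second remains bounded, which establishes the claim for every $s<1$ (the values $s\le -5/4$ being weaker still).

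The computation is essentially routine once the two a priori estimates are in hand; the only genuine use of the Leray--Hopf hypothesis (beyond those of Theorem \ref{main_Theorem}) is the uniform time-averaged $H^1$ control, which is precisely what the energy inequality buys and which a general weak solution need not satisfy. The one point I would check carefully is that both bounds are uniform in the window position $t$ — valid once the trajectory has entered the absorbing ball — so that the limit $t\to\infty$ may legitimately be passed inside the product; this uniformity, rather than the interpolation itself, is the mild obstacle.
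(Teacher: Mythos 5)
Your proposal is correct and follows essentially the same route as the paper: interpolate the $H^s$ norm between $H^{-5/4}$ and $H^1$, control the time-averaged $H^1$ norm of $w=u-v$ via the Leray--Hopf energy inequality, and let the $H^{-5/4}$ factor vanish by Theorem \ref{main_Theorem}. The only cosmetic difference is that you apply H\"older in time to split the averaged product, whereas the paper pulls out $\sup_{t'\in[t,t+T]}\|w(t')\|_{H^{-5/4}}^{2/p_1}$ and bounds the remaining $H^1$ average directly; the two devices are interchangeable here.
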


\section{Preliminaries}
\label{sec:pre}
In this section, we introduce several technical tools and notations for convenience of further presentation.

\subsection{Notation}

\label{sec:notation}
By $A\lesssim B$ we mean for some constant $C > 0$, we have that $A \leq C B$. The term $A \sim B$ refers to the bound $c_1 B \leq A \leq C_1 B$ for some positive constant $c_1, C_1$. In addition, the term $A \lesssim_s B$ means there exist an adimensional constant $C_s > 0$ that is only dependent on $s$ such that $A \leq C_s B$. We use the notation $\|\cdot\|_p=\|\cdot\|_{L^p}$ for $L^p$ norm and $(\cdot, \cdot)$ for the standard $L^2$-inner product. In this paper, the time averages are represented by the following:
\[
\<g\>(t) := \frac{1}{T}\int_t^{t+T} g(\tau) \, d\tau, 
\]
for some $T>0$.

\subsection{Littlewood-Paley theory}
\label{sec:LPD}
We recall here the standard Littlewood-Paley theory. For more detail, we refer to the books by Bahouri, Chemin and Danchin \cite{BCD} and Grafakos \cite{Gr}.  

Consider the physical domain $\T^3 = [0,L]^3$. For any integer $q \in \Z$, we denote the number $\lambda_q :=\frac{2^q}{L}$. Let $\chi\in C_0^\infty(\R^3)$ be the non-negative radial function such that 
\begin{equation} \label{eq:xi}
\chi(\xi):=
\begin{cases}
1, \ \ \mbox { for } |\xi|\leq\frac{3}{4}\\
0, \ \ \mbox { for } |\xi|\geq 1.
\end{cases}
\end{equation}
We then define
\[
\varphi(\xi):=\chi(\xi/2)-\chi(\xi)
\]
as well as
\begin{equation}\notag
\varphi_q(\xi):=
\begin{cases}
\varphi(2^{-q}\xi)  \ \ \ \mbox { for } q\geq 0,\\
\chi(\xi) \ \ \ \mbox { for } q=-1,
\end{cases}
\end{equation}
the sequence $\varphi_q$ then forms a dyadic partition of unity. Let $u$ be a tempered distribution vector field on $\T^3 =[0,L]^3$ and $q$ an integer larger than $-1$, then the $q^{\text{th}}$ Littlewood-Paley projection of $u$ is given by 
\[
 \Delta_q u(x) := \sum_{k\in\Z^3}\hat{u}(k)\phi_q(k)e^{i\frac{2\pi}{L} k \cdot x},
\]
where $\hat{u}(k)$ is the $k$th Fourier coefficient of $u$. For the clarity of presentation, we sometimes also use $u_q(x)$ as the Littlewood-Paley projection: 
\[
u_q (x) := \Delta_q u(x)
\]

It follows that 
\[
u=\sum_{q=-1}^\infty u_q
\]
in the distributional sense. The $H^s$-norm of $u$ can be defined in the following way:
\begin{equation}\label{def_Hs_norm}
    \|u\|_{H^s} := \left(\sum_{q=-1}^\infty\lambda_q^{2s}\|u_q\|_2^2\right)^{1/2},
\end{equation}
for each $u \in H^s$ and $s\in\R$. Note that $\|u\|_{H^0} \sim \|u\|_{L^2}$. Finally we denote 
\bg\notag
u_{\leq Q}:=\sum_{q=-1}^Q u_q, \quad u_{(p,Q]}:=\sum_{q=p+1}^Qu_q, \quad \tilde{u}_q := u_{q-1} + u_q + u_{q+1}.
\ed
for simplicity of notation.

\subsection{Technical lemmas}
\label{sec-para}
We recall several important lemmas which will be very useful in the proof of Theorem $\ref{main_Theorem}$. 

\begin{Lemma}\label{le:bern}(Bernstein's inequality) 
Let $n$ be the spacial dimension and $r\geq s \geq 1$, then for tempered distribution $u$ it follows that
\bg\label{Bern}
\|u_q\|_{r}\lesssim \lambda_q^{n(\frac{1}{s}-\frac{1}{r})}\|u_q\|_{s}.
\ed
\end{Lemma}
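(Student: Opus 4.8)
The plan is to exploit the frequency localization of the block $u_q$: its Fourier transform is supported in the annulus of radius comparable to $\lambda_q$ (resp.\ the ball of radius $\sim\lambda_{-1}$ when $q=-1$), which lets me realize $u_q$ as a convolution against a single rescaled Schwartz kernel and then invoke Young's convolution inequality with the exponents dictated by the target $r\geq s$. First I would fix a function $\widetilde\varphi\in C_0^\infty(\R^n)$, nonnegative and identically $1$ on the support of $\varphi$ while supported in a slightly fatter annulus, together with its low-frequency analogue $\widetilde\chi$ equal to $1$ on the support of $\chi$. Setting $\widetilde\varphi_q(\xi):=\widetilde\varphi(2^{-q}\xi)$ for $q\geq 0$ and $\widetilde\chi$ for $q=-1$, one has $\widetilde\varphi_q\varphi_q=\varphi_q$, hence $\widehat{u_q}=\widetilde\varphi_q\,\widehat{u_q}$ and therefore $u_q=K_q*u_q$, where $K_q$ is the inverse Fourier transform of $\widetilde\varphi_q$.

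Because $\widetilde\varphi_q$ is a dyadic rescaling of the fixed bump $\widetilde\varphi$, the kernel scales as $K_q(x)=\lambda_q^{\,n}K(\lambda_q x)$ for a fixed Schwartz function $K$, so a change of variables gives $\|K_q\|_{t}=\lambda_q^{\,n(1-1/t)}\|K\|_{t}$ for every $t\in[1,\infty]$, with $\|K\|_{t}<\infty$ since $K$ is Schwartz. I would then apply Young's inequality to $u_q=K_q*u_q$, choosing $t$ via $\frac1t=1+\frac1r-\frac1s$. The requirement $t\geq 1$ holds precisely because $r\geq s\geq 1$ forces $\frac1s-\frac1r\in[0,1]$, and since $1-\frac1t=\frac1s-\frac1r$ one obtains
\[
\|u_q\|_r\le\|K_q\|_{t}\,\|u_q\|_s=\lambda_q^{\,n(1-1/t)}\|K\|_{t}\,\|u_q\|_s=\lambda_q^{\,n(\frac1s-\frac1r)}\|K\|_{t}\,\|u_q\|_s .
\]
Absorbing the fixed constant $\|K\|_{t}$ into the implied constant yields \eqref{Bern}.

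The one point that needs care — and the only genuine obstacle, given that the ambient space is the torus $\T^n$ rather than $\R^n$ — is that $K_q$ must be taken as the periodized kernel associated with the discrete multiplier on $\frac{2\pi}{L}\Z^n$, and one must verify that its $L^{t}(\T^n)$ norm still obeys the scaling $\lambda_q^{\,n(1-1/t)}$ uniformly in $q$. This is where I would spend the effort: it holds because $\lambda_q=2^q/L\geq\lambda_{-1}=1/L$, so the rescaled bump never concentrates below the fundamental frequency and the periodization of $\lambda_q^{\,n}K(\lambda_q x)$ is controlled by the single Euclidean kernel. The extreme case $t=\infty$, which arises exactly when $s=1$ and $r=\infty$, then follows from $\|K_q\|_{\infty}\lesssim\lambda_q^{\,n}$, while $t=1$ (the case $r=s$) is the trivial estimate.
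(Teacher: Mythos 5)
Your proof is correct; note, however, that the paper does not prove this lemma at all --- it recalls Bernstein's inequality as a standard fact about Littlewood--Paley blocks, deferring to the cited references of Bahouri--Chemin--Danchin and Grafakos. Your argument (reproducing the block as a convolution $u_q=K_q*u_q$ against a rescaled kernel, applying Young's inequality with $\frac1t=1+\frac1r-\frac1s$, and checking that the periodized kernel on $\T^n$ still satisfies $\|K_q\|_t\lesssim\lambda_q^{n(1-1/t)}$ since $\lambda_q\geq 1/L$) is precisely the standard proof given in those references, and your identification of the periodization estimate as the only point needing care on the torus is the right one.
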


\begin{Lemma} \label{lemma:Bony} (Bony's paraproduct formula) Let $u$ and $v$ be two tempered distributions, then the following holds: 
\begin{equation}\notag
\begin{split}
\Delta_q(u\cdot\nabla v)=&\sum_{|q-p|\leq 2}\Delta_q(u_{\leq{p-2}}\cdot\nabla v_p)+
\sum_{|q-p|\leq 2}\Delta_q(u_{p}\cdot\nabla v_{\leq{p-2}})\\
&+\sum_{p\geq q-2} \Delta_q(\tilde u_p \cdot\nabla v_p).
\end{split}
\end{equation}
\end{Lemma}

\begin{Lemma} \label{lemma: Jensen} (Jensen's inequality) Let $\phi : I \to \R$ be a convex function on $I \subset \R$. For $x_1, x_2, ..., x_n \in I$ and $\a_1, \a_2, ..., \a_n \in (0,1)$ such that 
\bg \notag
\sum_{j=1}^n \a_j = 1
\ed
it follows that 
\begin{equation}\notag
\phi \Big( \sum_{j=1}^n \a_j x_j \Big) \leq \sum_{j=1}^n \a_j \phi (x_j).
\end{equation}
\end{Lemma}

\subsection{Weak solutions and energy inequality} 
We call an $L^2(\T^3)$ valued function $u(t)$ a weak solution of \eqref{nse} on $[0,\infty)$ if $u \in C([0,\infty); L^2_{\mathrm{w}}) \cap L_{\mathrm{loc}}^2(0,\infty; H^1)$ solves \eqref{nse} in the sense of distributions. A Leray-Hopf solution $u(t)$ is a weak solution satisfying the energy inequality 
\begin{equation} \label{eq:EI-sec3}
\frac{1}{2}\|u(t)\|_2^2 \leq \frac{1}{2}\|u(t_0)\|_2^2 - \nu\int_{t_0}^{t} \|\nabla u(\tau)\|_2^2\, d\tau + \int_{t_0}^{t} (f,u)\, d\tau,
\end{equation}
for almost everywhere $t_0 > 0$ and all $t >t_0$. A Leray solution $u(t)$ is a Leray-Hopf solution satisfying the above energy inequality for $t_0=0$ and all $t>t_0$.

\bigskip

\section{The Intermittency dimension} \label{sec:Intermittency}

We start from the definition of the Kolmogorov's dissipation wavenumber:
\begin{equation} \label{eq:kdeps-inermit}
\kappa_\mathrm{d} := \left(\frac{\varepsilon }{\nu^3} \right)^{\frac{1}{d+1}}, \qquad  \varepsilon := \nu\l_0^d\<\|\nabla u\|_2^2\>,
\end{equation}
here $d$ stands for the intermittency dimension and $\el$ the average energy dissipation rate per unit active volume. Observe that by Bernstein's inequality, we have for $r \geq 2$,
\begin{equation} \label{eq:BinInt}
\l_0^{3-\frac{6}{r}}\l_q^{-1+\frac{6}{r}} \|u_q\|_2^2 \lesssim \l_q^{-1+\frac{6}{r}} \|u_q\|_r^2 \lesssim \l_q^{2} \|u_q\|_2^2,
\end{equation}
The intermittency dimension $d$ describes the level of saturation of Bernstein's inequality (see also \cite{CSr,CSint,CSint-new} for similar definitions) and is defined by
\begin{equation} \label{eq:intermdef}
d:= \sup\left\{D\in [0,3]: r \in [2,\infty], \left<\sum_{q\leq Q(t)}\l_q^{-1+\frac{6}{r}+D(1-\frac{2}{r})} \|u_q\|_r^2 \right> \lesssim  \l_0^{D(1-\frac{2}{r})}\left<\sum_{q \leq Q(t)}\l_q^{2} \|u_q\|_2^2 \right> \right\},
\end{equation}
for $u \not\equiv 0$, and $d=3$ for $u \equiv 0$ on $[t, t+T]$. Recall here that $Q(t)$ is such that $\Lambda(t) = \l_{Q(t)}$.
Thanks to \eqref{eq:BinInt} and the fact that $\<\sum_{q}\l_q^{2} \|u_q\|_2^2 \><\infty$ , we have that
\begin{equation}\label{interm_estimate}
\left<\sum_{q\leq Q(t)}\l_q^{-1+\frac{6}{r}+d(1-\frac{2}{r})} \|u_q\|_r^2 \right> \lesssim  \l_0^{d}\left<\sum_{q \leq Q(t)}\l_q^{2} \|u_q\|_2^2 \right>.
\end{equation}
for any $r \in [2,\infty]$.

\section{Proof of the main result}
\label{sec:pf}

\begin{proof}[Proof of Theorem \ref{main_Theorem}]

Let $(u(t), p_1(t))$ and $(v(t),p_2(t))$ be two weak solutions of NSE. Denote their difference by $w:=u-v$, then it satisfies the equation
\begin{equation} \label{eq-w}
w_t+u\cdot\nabla w+w\cdot\nabla v=-\nabla p'+\nu \Delta w
\end{equation}
in the sense of  distributions, and here $p'=p_1 -p_2$. For a more convenient notation let us also denote that
\begin{equation}\label{def_sigma}
   \s := \haf \big(\d-1 \big).
\end{equation}
Since $\delta \in [0,3]$, we have $-\haf \leq \sigma \leq 1$. Recall from the definition $\rf{def_det_wave}$ of the wavenumber: 
\begin{equation*}
\begin{split}
\lb^{r,\delta}_{u}=\min\{\lambda_q: (L\lambda_{p-q})^{\frac3{r}+\sigma }\lambda_q^{-1+\frac3r}\|u_p\|_r<c_{r,\delta} \nu, \forall p>q, \\
\text{and} \ \ \lambda_{q}^{-2}\|\nabla u_{\leq q}\|_\infty<c_{r,
\delta}\nu, q\in \N \}
\end{split}
\end{equation*}
and from $\rf{def_uni_det_wave}$ that
\begin{equation*}
\lb_{u}(t):=\min_{(r,\d)\in \Ss}\lb^{r,\delta}_u(t)
\end{equation*} 
Let $\lb(t):=\max\{\lb_{u}(t), \lb_{v}(t)\}$ and $Q(t)$ be such that $\lb(t)=\lambda_{Q(t)}$, hence by assumption, $w_{\leq Q(t)}(t)\equiv 0$.
Take the tuple $(r,\d) \in \Ss$ such that $\lb^{r,\d}_u(t) = \lb(t)$ (we can without loss of generality assume that $\lb(t) = \lb_u(t)$). In addition, denote
\begin{equation}\label{def_s}
s =\min\left\{\textstyle -\frac12+\frac{\delta}{4},0\right\}.
\end{equation}
A simple computation yields 
\begin{equation}\label{s_value_range}
-1-\sigma \leq s \leq \sigma\leq 1.
\end{equation}

Multiplying the equation (\ref{eq-w}) with $\ls\Delta^2_qw$, integrating over space and time and summing over $q \geq -1$ gives
\begin{equation}\label{eq-w2}
\begin{split}
\frac{1}{2}\|w(t)\|_{H^s}^2- \frac{1}{2}\|w(t_0)\|_{H^s}^2 &+\nu \int_{t_0}^t\|w\|_{H^{1+s}}^2 \, d\tau\\
&\leq \int_{t_0}^t \sum_{q\geq -1}\ls\left|\int_{\T^3}\Delta_q(w\cdot\nabla v) w_q\, dx\right|\, d\tau\\
&+\int_{t_0}^t\sum_{q\geq -1}\ls\left|\int_{\T^3}\Delta_q(u\cdot\nabla w) w_q\, dx\right|\, d\tau,\\
&=:\int_{t_0}^t N \, d\tau+ \int_{t_0}^t M\, d\tau.
\end{split}
\end{equation}

The term $M$ can be decomposed via Bony's paraproduct formula (see lemma $\ref{lemma:Bony}$):
\begin{equation}\notag
\begin{split}
M\leq& \sum_{q\geq -1}\ls\sum_{|q-p|\leq 2}\left|\int_{\T^3}\Delta_q(w_{\leq{p-2}}\cdot\nabla v_p) w_q\, dx\right|\\
&+\sum_{q\geq -1}\ls\sum_{|q-p|\leq 2}\left|\int_{\T^3}\Delta_q(w_{p}\cdot\nabla v_{\leq{p-2}})w_q\, dx\right|\\
&+\sum_{q\geq -1}\ls\sum_{p\geq q-2} \left|\int_{\T^3}\Delta_q(\tilde w_p\cdot\nabla v_p)w_q\, dx\right|\\
=:&M_{1}+M_{2}+M_{3}.
\end{split}
\end{equation}
For $M_1$, by H\"older's inequality,
\[
\begin{split}
  M_{1} 
& \leq \sum_{q>Q}\sum_{\substack{|q-p|\leq 2\\p>Q+2}}\ls\int_{\T^3}|\Delta_q(w_{\leq{p-2}}\cdot\nabla v_p) w_q|\, dx\\
 &\lesssim  \sum_{q>Q}\sum_{\substack{|q-p|\leq 2\\p>Q+2}}\ls\|w_{(Q, p-2]}\|_{\frac{2r}{r-2}}\lambda_p\|v_p\|_r\|w_q\|_2.
\end{split}
\]
Recall that by definition $\lb(t) = \l_{Q(t)}$, we obtain
\[
\begin{split}
 M_{1}  &\lesssim  c_{r,\delta}\nu\sum_{q>Q}\sum_{\substack{|q-p|\leq 2\\p>Q+2}}\ls\lb^{1+\sigma}\lambda_p^{1-\sigma-\frac{3}{r}}\|w_q\|_2\sum_{Q<p'\leq p-2}\|w_{p'}\|_{\frac{2r}{r-2}} \\
 &\lesssim  c_{r,\delta}\nu\sum_{q>Q}\lambda_q^{1+s}\|w_q\|_2\left(\sum_{Q<p'\leq q}\lambda_{p'}^{1+s}\|w_{p'}\|_2\lambda_{p'}^{-1-s+\frac{3}{r}}\lambda_{q}^{s-\sigma-\frac{3}{r}}\lambda_Q^{1+\sigma}\right)\\
&\lesssim  c_{r,\delta}\nu \sum_{q>Q}\lambda_q^{1+s}\|w_q\|_2\left(\sum_{Q<p'\leq q}\lambda_{p'}^{1+s}\|w_{p'}\|_2(L\l_{q-p'})^{s-\sigma-\frac{3}{r}}\right),
\end{split}
\]
where we also use the fact that $\sigma\geq -\haf$ and $s \leq \sigma$.
From the Young's inequality and Jensen's inequality,
\[
\begin{split}
M_1&\lesssim  c_{r,\delta}\nu \sum_{q>Q}\lambda_q^{2+2s}\|w_q\|_2^2+c_{r,\delta}\nu \sum_{q>Q}\left(\sum_{Q<p'\leq q}\lambda_{p'}^{1+s}\|w_{p'}\|_2(L\l_{q-p'})^{s-\sigma-\frac{3}{r}}\right)^2\\
&\lesssim c_{r,\delta}\nu \sum_{q>Q}\lambda_q^{2+2s}\|w_q\|_2^2+c_{r,\delta}\nu \sum_{q>Q}\left (\sum_{Q<p'\leq q}(L\l_{q-p'})^{s-\sigma-\frac{3}{r}}\right )\sum_{Q<p'\leq q}\lambda_{p'}^{2+2s}\|w_{p'}\|_2^2(L\l_{q-p'})^{s-\sigma-\frac{3}{r}}\\
&\lesssim  c_{r,\delta}\nu \sum_{q>Q}\lambda_q^{2+2s}\|w_q\|_2^2+c_{r,\delta}\nu \sum_{p'>Q}\lambda_{p'}^{2+2s}\|w_{p'}\|_2^2\left (\sum_{q\geq p'}(L\l_{q-p'})^{s-\sigma-\frac{3}{r}}\right )^2\\
&\lesssim_{r,\d}  c_{r,\delta}\nu \|w\|_{H^{s+1}}^2,
\end{split}
\]
the upper bound is applicable since $s - \sigma - \frac{3}{r} < 0$. To be precise, the bound for $M_1$ is
\bg \label{precise_bound1}
M_{1} \lesssim c_{r,\delta}\nu \|\nabla^{1+s} w\|_2^2\left(1+(1-2^{s-\sigma-\frac{3}{r}})^{-2}\right).
\ed
Since $(1-2^{s-\sigma-\frac{3}{r}})^{-1} \to \infty$ as $\delta \to 0^+$ and $r \to \infty$ by definitions of $\sigma$ and $s$, the choice of 
$c_{r,\delta}$ is such that $c_{r,\delta}\to 0$ as $\delta\to 0+$ and $r \to \infty$, which we will see by the end of the proof. Note that from $\rf{domain_of_index}$ we are free from the choice of $(r,\d) = (\infty,0)$.

Next, we estimate $M_2$ with the similar manner,
\[
\begin{split}
  M_{2} &\leq \sum_{q>Q}\sum_{\substack{|q-p|\leq 2\\p>Q}}\ls\int_{\T^3}|\Delta_q(w_p\cdot\nabla v_{\leq p-2}) w_q| \, dx\\
&\leq  \sum_{q>Q}\sum_{\substack{|q-p|\leq 2\\p>Q}}\ls\|w_p\|_2\|\nabla v_{(Q,p-2]}\|_r\|w_q\|_\frac{2r}{r-2}\\
&+\sum_{q>Q}\sum_{\substack{|q-p|\leq 2\\p>Q}}\ls\|w_p\|_2\|\nabla v_{\leq Q}\|_\infty\|w_q\|_2\\
&\equiv M_{21}+M_{22},
\end{split}
\]
and here if $p-2 \leq Q$, we use the convention that $(Q,p-2] = \varnothing$.
Since $r \geq 2$, we get
\[
\begin{split}
M_{21}
           &\lesssim \sum_{p> Q}\sum_{|q-p|\leq 2}\ls\lambda_q^{\frac{3}{r}}\| w_p\|_2\|w_q\|_{2}\sum_{Q<p'\leq p-2}\|\nabla v_{p'}\|_r\\
           &\lesssim \sum_{q> Q}\ls\| w_q\|_2^2\sum_{Q<p'\leq q}\lambda_{p'}\lambda^{\frac{3}{r}}_{q}\|v_{p'}\|_r\\
           &\lesssim c_{r,\delta}\nu\sum_{q> Q}\ls\| w_q\|_2^2\sum_{Q<p'\leq q}\lambda_{p'}^{1-\sigma-\frac{3}{r}}\lambda^{\frac{3}{r}}_q\lb^{1+\sigma}\\
           &\lesssim c_{r,\delta}\nu\sum_{q> Q}\l_q^{2+2s}\| w_q\|_2^2\sum_{Q<p'\leq q}\lambda_{p'}^{2-\frac{3}{r}}\l_q^{\frac{3}{r}-2}\\
           &\lesssim  c_{r,\delta}\nu \sum_{q> Q}\l_q^{2+2s}\| w_q\|_2^2\sum_{q\geq p}\left( L\lambda_{q-p} \right)^{\frac{3}{r}-2}
           \lesssim  c_{r,\delta}\nu \|w\|_{H^{s+1}}^2
\end{split}
\]
Note that the sum $\sum_{q \geq p}(L\l_{q-p})^{\frac{3}{r}-2}$ can be bounded uniformly. On the other hand, by definition of $\lb$ we have that $\lb^{-2}\|\nabla v_{\leq Q}\| < c_{r,\d} \nu$, and therefore, 
\[
\begin{split}
M_{22}
&\lesssim  \sum_{q> Q}\sum_{\substack{|q-p|\leq 2\\p>Q}}\ls \|w_p\|_2\|w_q\|_2\|\nabla v_{\leq Q}\|_\infty\\
&\lesssim  c_{r,\delta}\nu\sum_{q> Q}\lb^2\ls \|w_q\|_2^2\\
&\lesssim  c_{r,\delta}\nu \sum_{q> Q}\l_q^{2+2s} \|w_q\|_2^2 \lesssim  c_{r,\delta}\nu \|w\|_{H^{s+1}}^2. 
\end{split}
\]
To estimate $M_{3}$, we use integration by parts and obtain 
\[
\begin{split}
  M_{3}&=\sum_{q\geq -1}\ls\sum_{p\geq q-2} \left|\int_{\T^3}\Delta_q(\tilde w_p\cdot\nabla v_p)w_q\, dx\right|\\ 
  &\leq  \sum_{q>Q}\sum_{p\geq q-2}\ls\int_{\T^3}|\Delta_q(\tilde w_p \otimes v_{p}) \nabla w_q| \, dx\\
&\leq  \sum_{p> Q}\sum_{Q<q\leq p+2}\l_q^{1+2s}\|\tilde w_p\|_2\|w_q\|_{\frac{2r}{r-2}}\|v_p\|_r.
\end{split}
\]
By H\"older's inequality, Bernstein's inequality and definition of $\lb$ we have
\[
\begin{split}
M_{3}
&\lesssim  \sum_{p> Q}\|\tilde w_p\|_2\|v_{p}\|_r\sum_{Q<q\leq p+2}\lambda_q^{1+2s}\|w_q\|_{\frac{2r}{r-2}}\\
&\lesssim c_{r,\delta}\nu\sum_{p> Q}\lb^{1+\sigma}\l_p^{-\sigma-\frac{3}{r}}\|\tilde w_p\|_2\sum_{Q<q\leq p+2}\lambda_q^{1+2s+\frac{3}{r}}\|w_q\|_2\\
&\lesssim  c_{r,\delta}\nu\sum_{p> Q}\l_p^{1+s}\|\tilde w_p\|_2\sum_{Q<q\leq p+2}\lambda_q^{1+s}\|w_q\|_2\l_Q^{1+\sigma}\l_p^{-1-s-\sigma-\frac{3}{r}}\l_q^{s+\frac{3}{r}}\\
\end{split}
\]
then thanks to Jensen's inequalities,  
\[
\begin{split}
M_{3}&\lesssim  c_{r,\delta}\nu \sum_{p> Q}\l_p^{1+s}\|\tilde w_p\|_2\sum_{Q<q\leq p+2}\lambda_q^{1+s}\|w_q\|_2(L\l_{q-p})^{1+s+\sigma+\frac{3}{r}}\\
&\lesssim   c_{r,\delta}\nu \sum_{p> Q}\l_p^{2+2s}\|w_p\|_2^2+ c_{r,\delta}\nu \sum_{p>Q}\left(\sum_{Q<q\leq p+2}\lambda_q^{1+s}\|w_q\|_2(L\l_{q-p})^{1+s+\sigma+\frac{3}{r}}\right)^2\\
&\lesssim_{r,\d}  c_{r,\delta}\nu \sum_{p> Q}\l_p^{2+2s}\|w_p\|_2^2,
\end{split}
\]
Similar to the precise bound $\rf{precise_bound1}$, here we have that \\
\begin{equation}\label{precise_bound2}
    M_{3} \lesssim c_{r,\delta}\nu \|\nabla^{1+s} w\|_2^2\left(1+(1-2^{-(1+s+\sigma+\frac{3}{r})})^{-2}\right).
\end{equation}
\\
as this will determine our choice of $c_{r,\delta}$. Therefore, the estimate of $M_1$, $M_2$ and $M_3$ imply that
\begin{equation}\label{est-i1}
M\lesssim  c_{r,\delta} \nu \|\nabla^{1+s} w\|_2^2.
\end{equation}\\
The estimate of the term $N$ is essentially the same as before: with Bony's paraproduct formula we firstly write
\begin{equation}\notag
\begin{split}
N=&\int_{t_0}^t \sum_{q\geq -1}\ls\left|\int_{\T^3}\Delta_q(u\cdot\nabla w) w_q\, dx\right|\, d\tau\\
\leq&\sum_{q\geq -1}\sum_{|q-p|\leq 2}\ls\left|\int_{\T^3}\Delta_q(u_{\leq{p-2}}\cdot\nabla w_p)w_q\, dx\right|\\
&+\sum_{q\geq -1}\sum_{|q-p|\leq 2}\ls\left|\int_{\T^3}\Delta_q(u_{p}\cdot\nabla w_{\leq{p-2}})w_q\, dx\right|\\
&+\sum_{q\geq -1}\sum_{p\geq q-2}\sum_{|p-p'|\leq 1}\ls\left|\int_{\T^3}\Delta_q(u_p\cdot\nabla w_{p'})w_q\, dx\right|\\
=:& N_{1}+N_{2}+N_{3}.
\end{split}
\end{equation}
Secondly, realize that the term $N_1$ can be further decomposed by
\begin{equation}\notag
\begin{split}
N_{1}\leq& \sum_{q\geq -1}\sum_{|q-p|\leq 2}\ls\left|\int_{\T^3}[\Delta_q, u_{\leq{p-2}}\cdot\nabla] w_pw_q\, dx\right|\\
&+\sum_{q\geq -1}\ls\left|\int_{\T^3} u_{\leq q-2}\cdot\nabla w_q w_q\, dx\right|\\
&+\sum_{q\geq -1}\sum_{|q-p|\leq 2}\ls\left|\int_{\T    ^3}( u_{\leq{p-2}}- u_{\leq q-2})\cdot\nabla\Delta_qw_p w_q\, dx\right| \\
=&N_{11}+N_{12}+N_{13}.
\end{split}
\end{equation}
The term $N_{12}$ is obtained by the identity $\sum_{|p-q|\leq 2}\Delta_qw_p=w_q$. Furthermore, since $\divv u_{\leq q-2} = 0$, an integration by part implies that $N_{12} = 0$. The commutator in $N_{11}$ is given by
\[[\Delta_q, u_{\leq{p-2}}\cdot\nabla] w_p:=\Delta_q(u_{\leq{p-2}}\cdot\nabla w_p)-u_{\leq{p-2}}\cdot\nabla \Delta_qw_p.\]
 It can be deduced (see \cite{CD} for more details) that for any $1\leq r\leq \infty$,
\begin{equation}\notag
\|[\Delta_q, u_{\leq{p-2}}\cdot\nabla] w_p\|_r\\
\lesssim \|\nabla  u_{\leq p-2}\|_\infty\|w_p\|_r.
\end{equation}
Hence the estimate of $J_{11}$ is done as follows:
\[
\begin{split}
  N_{11} &\leq  \sum_{q>Q}\sum_{\substack{|q-p|\leq 2\\p>Q}}\ls\int_{\T^3}|[\Delta_q, u_{\leq{p-2}}\cdot\nabla] w_pw_q|\, dx\\
  &\leq  \sum_{q>Q}\sum_{\substack{|q-p|\leq 2\\p>Q}}\ls\|\nabla u_{(Q,p-2]}\|_\infty\|w_q\|_{2}\|w_p\|_2\\
  &+\sum_{q>Q}\sum_{\substack{|q-p|\leq 2\\p>Q}}\ls\|\nabla u_{\leq Q}\|_\infty\|w_p\|_2\|w_q\|_2\\
  &\equiv N_{111}+N_{112}.
\end{split}
\]
With Bernstein's inequality and definition of $\lb$ we get,
\[
\begin{split}
N_{111}
&\lesssim  \sum_{q>Q}\l_q^{2s}\|w_q\|_2^2\sum_{Q<p'\leq q}\l^{1+\frac{3}{r}}_{p'}\|u_{p'}\|_r\\
&\lesssim   c_{r,\delta}\nu\sum_{q>Q}\l_q^{2s}\|w_q\|_2^2\sum_{Q<p'\leq q}\lb^{1+\sigma}\l_{p'}^{1-\sigma}\\
&\lesssim    c_{r,\delta}\nu\sum_{q>Q}\l_q^{2+2s}\|w_q\|_2^2\sum_{Q<p'\leq q}\l_q^{1+\sigma}\l_{p'}^{1-\sigma}\l_q^{-2}\\
&\lesssim    c_{r,\delta}\nu \sum_{q>Q}\l_q^{2+2s}\|w_q\|_2^2,
\end{split}
\]
where we used $\sigma\geq -\haf$ . The term $N_{112}$ can be estimated similar as the term $M_{22}$, and so we obtain
\[
N_{112}
\lesssim    c_{r,\delta}\nu\lb^2\sum_{q>Q}\l_q^{2s}\|w_q\|_2^2
\lesssim    c_{r,\delta}\nu \sum_{q>Q}\l_q^{2+2s}\|w_q\|_2^2.
\]
Decomposing $N_{13}$ yields  
\[
\begin{split}
N_{13}
&\leq \sum_{q>Q}\sum_{\substack{|q-p|\leq 2\\p>Q}}\ls\int_{\T^3}|( u_{\leq{p-2}}- u_{\leq q-2})\cdot\nabla\Delta_qw_p w_q|\, dx\\
&\lesssim \sum_{q>Q}\l_q^{2s}\|u_{(q-4, Q]}\|_\infty\|\nabla w_q\|_2\|w_q\|_2+\sum_{q>Q}\sum_{\substack{q-4<p'\leq q\\p'>Q}}\l_q^{2s}\|u_{p'}\|_r\|\nabla w_q\|_{\frac{2r}{r-2}}\|w_q\|_2\\
&\lesssim \sum_{q>Q}\l_q^{1+2s}\|u_{(q-4, Q]}\|_\infty\|w_q\|_2^2+\sum_{q>Q}\sum_{\substack{q-4<p'\leq q\\p'>Q}}\l_q^{1+2s+\frac{3}{r}}\|u_{p'}\|_r\|w_q\|_2^2\\
&\equiv J_{131}+J_{132}.
\end{split}
\]
For $N_{131}$, realize that by definition of $\lb$ and Bernstein's inequality, one gets $\|u_{(q-4,Q]} \|_{\infty} < c_{r,\d} \nu \lb$, hence
\[
N_{131}\lesssim  c_{r,\delta}\nu \lb\sum_{q>Q}\l_q^{1+2s}\|w_q\|_2^2\lesssim  c_{r,\delta}\nu \sum_{q>Q}\l_q^{2+2s}\|w_q\|_2^2,
\]
Additionally, 
\[
\begin{split}
N_{132}& =\sum_{q>Q}\sum_{\substack{q-4\leq p'\leq q\\p'>Q}}\l_q^{1+2s+\frac{3}{r}}\|u_{p'}\|_r\|w_q\|_2^2\\
& \lesssim  c_{r,\delta}\nu\sum_{q>Q}\sum_{\substack{q-4\leq p'\leq q\\p'>Q}}\l_q^{1+2s+\frac{3}{r}}\lb^{1+\sigma}\l_{p'}^{-\sigma-\frac{3}{r}}\|w_q\|_2^2\\
& \lesssim  c_{r,\delta}\nu\sum_{q>Q}\l_q^{2+2s}\|w_q\|_2^2 \l_q^{-1-\sigma}\lb^{1+\sigma} \sum_{\substack{q-4\leq p'\leq q\\p'>Q}}\l_q^{\sigma+\frac{3}{r}}\l_{p'}^{-\sigma-\frac{3}{r}}\\
& \lesssim  c_{r,\delta}\nu\sum_{q>Q}\l_q^{2+2s}\|w_q\|_2^2(L\l_{Q-q})^{1+\sigma}\\
& \lesssim  c_{r,\delta}\nu\sum_{q>Q}\l_q^{2+2s}\|w_q\|_2^2,
\end{split}
\]
and notice that similar to the estimate for $M_{21}$, since $\sigma \geq -\haf$, we have a uniform bound for the term $\sup_{q> Q} (L\l_{Q-q})^{1+\sigma}$.\\\\
The term $N_2$ is estimated similarly as $I_1$:
\[
\begin{split}
  N_{2} &= \sum_{q>Q}\sum_{\substack{|q-p|\leq 2\\p>Q+2}}\ls\left|\int_{\T^3}\Delta_q(u_{p}\cdot\nabla w_{\leq{p-2}})w_q\, dx\right|\\
  &\leq \sum_{q>Q}\sum_{\substack{|q-p|\leq 2\\p>Q+2}}\ls\|u_p\|_r\|\nabla w_{(Q, p-2]}\|_{\frac{2r}{r-2}}\|w_q\|_2\\
&\lesssim   c_{r,\delta}\nu\sum_{q>Q}\sum_{\substack{|q-p|\leq 2\\p>Q+2}}\ls\lb^{1+\sigma}\l_p^{-\sigma-\frac{3}{r}}\|w_q\|_2\|\nabla w_{(Q, p-2]}\|_\frac{2r}{r-2}\\
&\lesssim   c_{r,\delta}\nu\sum_{q>Q}\lb^{1+\sigma}\l_q^{2s-\sigma-\frac{3}{r}}\|w_q\|_2\|\nabla w_{(Q, q]}\|_\frac{2r}{r-2}\\
&\lesssim   c_{r,\delta}\nu\sum_{q>Q}\lb^{1+\sigma}\l_q^{2s-\sigma-\frac{3}{r}}\|w_q\|_2\sum_{Q<p'\leq q}\l^{1+\frac{3}{r}}_{p'}\| w_{p'}\|_2\\       
&\lesssim   c_{r,\delta}\nu\sum_{q>Q}\l_q^{1+s}\|w_q\|_2\sum_{Q<p'\leq q}\l_{p'}^{1+s}\| w_{p'}\|_2\l_q^{s-\sigma-1-\frac{3}{r}}\l_{p'}^{-s+\frac{3}{r}}\lb^{1+\sigma}\\     
&\lesssim   c_{r,\delta}\nu\sum_{q>Q}\l_q^{1+s}\|w_q\|_2\left(\sum_{Q<p'\leq q}\l_{p'}^{1+s}\| w_{p'}\|_2(L\l_{q-p'})^{s-\sigma-1-\frac{3}{r}}\right)\\
&\lesssim   c_{r,\delta}\nu\sum_{q>Q}\l_q^{2+2s}\|w_q\|_2^2+ c_{r,\delta}\nu\sum_{q>Q}\left(\sum_{Q<p'\leq q}\l_{p'}^{1+s}\| w_{p'}\|_2(L\l_{q-p'})^{s-\sigma-1-\frac{3}{r}}\right)^2\\
&\lesssim_{r,\d}   c_{r,\delta}\nu\sum_{q>Q}\l_q^{2+2s}\|w_q\|_2^2,
\end{split}
\]
where we should highlight that the precise bound in the last term is 
\begin{equation}\label{precise_bound3}
N_{2} \lesssim c_{r,\delta}\nu \|\nabla^{1+s} w\|_2^2\left(1+(1-2^{s-\sigma-1-\frac{3}{r}})^{-2}\right).
\end{equation}
The term $N_3$ can be bounded by a similar manner as $M_3$, with the precise bound the same size as in $\rf{precise_bound2}$. 
Then the above estimates for $N$ lead to,
\begin{equation}\label{est-i2}
N \lesssim  c_{r,\delta} \nu \|\nabla^{1+s} w\|_2^2.
\end{equation}
Since the size of the precise bound $\eqref{precise_bound1}$ is larger than both $\eqref{precise_bound2}$ and \eqref{precise_bound3} asymptotically as $r \to \infty$ and $\d \to 0^+$, in view of \eqref{est-i1} and \eqref{est-i2}, we can choose a positive adimensional constant $c_0$ large enough such that 
\begin{equation}\label{est-i3}
M+N \leq C_{r,\d}  c_{r,\delta} \nu \|\nabla^{1+s} w\|_2^2,
\end{equation}
where 
\begin{equation}\label{def_C_rd}
C_{r,\d} := c_0 \Big ( 1+ (1-2^{s-\sigma-\frac{3}{r}})^{-2}\Big )
\end{equation}
Let 
\begin{equation}\label{def_c_rd}
c_{r,\delta}:=1/(2C_{r,\d})
\end{equation}
then $\eqref{est-i3}$ writes 
\begin{equation}\label{est-i4}
M+N \leq \haf \nu \|\nabla^{1+s} w\|_2^2,
\end{equation}
and hence from \eqref{eq-w2} we deduce that for all $t_0\leq t$,
\[
\begin{split}
\|w(t)\|_{H^s}^2 \leq & \|w(t_0)\|_{H^s}^2 - \nu \int_{t_0}^t \|\nabla^{1+s} w\|_2^2 \, d\tau\\
\leq & \|w(t_0)\|_{H^s}^2 - \nu \k^2_0 \int_{t_0}^t \|w\|^2_{H^s} \, d\tau,
\end{split}
\]
where we used the Poincar\'e's inequality and $\k_0=\frac{2\pi}{L}$.
Gr\"onwall's inequality then implies
\[
\|w(t)\|_{H^s}^2 \leq \|w(t_0)\|_{H^s}^2e^{-\nu\k_0^{2}(t-t_0)}, \qquad t\geq t_0.
\]\\
Taking the limit as $t \to \infty$ completes the proof.
\end{proof}

\begin{proof}[Proof of Corollary $\ref{main_Corollary}$]
    Let $u(t)$ and $v(t)$ be two Leray-Hopf solutions satisfying the condition in Theorem $\ref{main_Theorem}$. Let $s < 1$ and denote the difference again by $\om = u(t)-v(t)$, then from the energy inequality for $u(t)$ and $v(t)$,
\begin{align*}
    \|u(t)\|^2_2 &\leq \|u(t_0)\|^2_2 - 2\nu \int_{t_0}^t \|\nabla u(\tau)\|^2_2 d\tau +  \int_{t_0}^{t} (f,u)\, d\tau \\
    \|v(t)\|^2_2 &\leq \|v(t_0)\|^2_2 - 2\nu \int_{t_0}^t \|\nabla v(\tau)\|^2_2 d\tau + \int_{t_0}^{t} (f,v)\, d\tau
\end{align*}
and a simple estimation for the work done by $f$,
\begin{align*}
     \int_{t_0}^{t} (f,u)\, d\tau \lesssim \|f\|_{L^2_{loc}([0,\infty),H^{-1})}\|u\|_{L^2_{loc}([0,\infty),H^{1})} \lesssim 1 \\
     \int_{t_0}^{t} (f,v)\, d\tau \lesssim \|f\|_{L^2_{loc}([0,\infty),H^{-1})}\|v\|_{L^2_{loc}([0,\infty),H^{1})} \lesssim 1
\end{align*}
we can develop a bound for the dissipating term,
    \begin{align}
    2 \nu \int_{t_0}^t \|\nabla  \om (\tau) \|_2^2 d\tau &\leq 2\nu \bigg(\int_{t_0}^t \|\nabla u(\tau)\|^2_2 d\tau+\int_{t_0}^t \|\nabla v(\tau)\|^2_2 d\tau \bigg) \notag\\ 
    &\lesssim \|u(t_0) \|^2_2 + \|v(t_0)\|^2_2 \lesssim 1 \notag
    \end{align}
for any $t > t_0$. In particular, if we fix some $T > 0$, then for any $t > t_0$
\begin{equation}
\frac{2\nu}{T} \int_t^{t+T} \| \om (\tau) \|_{H^1}^2 d\tau \leq \frac{2\nu}{T} \int_{t_0}^{t+T} \| \om (\tau) \|_{H^1}^2 \lesssim 1 \label{bound_dissip_term}
\end{equation}
Recall from the definition $\eqref{def_Hs_norm}$ that we can write
\begin{align*}
    \|\om\|^2_{H^s} = \sum_{q=-1}^\infty \l^{2s}_q\|\om_q\|^2_2 = \sum_{q=-1}^\infty \l^{-\frac{5}{2p_1}}_q \|\om_q\|^{\frac{2}{p_1}}_2  \l^{\frac{5}{2p_1}+2s}_q\|\om_q\|^{\frac{2}{p_2}}_2
\end{align*}
where the tuple $(p_1,p_2)$ satisfies that
\begin{equation*}
    \frac{1}{p_1} + \frac{1}{p_2} = 1
\end{equation*}
By H{\"o}lder's inequality with the exponents
\begin{equation*}
    p_1 = \frac{9}{4-4s}, \ \ \ p_2 = \frac{9}{5+4s}
\end{equation*}
since 
\[
\frac{5}{2p_1}+2s = \frac{5}{9}(2-2s)+2s = \frac{10+8s}{9} = \frac{2}{p_2}
\]
we have 
\begin{align*}
    \|\om\|^2_{H^{s}} &\leq \bigg( \sum_{q=-1}^\infty \l^{-\frac{5}{2}}_q \|\om_q\|_2^2 \bigg)^\frac{2}{p_1} \bigg( \sum_{q=-1}^\infty \l^{2}_q \|\om_q\|_2^2 \bigg)^\frac{2}{p_2} \\
    &\leq \|\om\|^{\frac{2}{p_1}}_{H^{-\frac{5}{4}}} \|\om\|^{\frac{2}{p_2}}_{H^1}
\end{align*}
Therefore taking the average time integral on both sides of the above we obtain 
\begin{align*}
    \<\|u - v \|^2_{H^s} \>(t) &:= \frac{2\nu}{T} \int_t^{t+T} \|\om(\tau)\|_{H^s}^2 d\tau \leq \frac{2\nu}{T}\int_t^{t+T} \|\om\|^{\frac{2}{p_1}}_{H^{-\frac{5}{4}}} \|\om \|^{\frac{2}{p_2}}_{H^{1}} d\tau \\
    &\lesssim \sup_{t' \in [t,t+T]} \|\om(t')\|^{\frac{2}{p_1}}_{H^{-\frac{5}{4}}} \frac{2\nu}{T}\int_t^{t+T} \|\om \|^2_{H^{1}} d\tau \\
    &\lesssim \sup_{t' \in [t,t+T]}  \|\om(t')\|^{\frac{2}{p_1}}_{H^{-\frac{5}{4}}} 
\end{align*}
Observe that since $s < 1$, we have $\frac{2}{p_2} < 2$, leading to the second last inequality of the above. Also note that if $s = 1$, then $p_1 = +\infty$ and $p_2 = 1$
and the above inequality reduce to $\eqref{bound_dissip_term}$. Finally, from Theorem $\ref{main_Theorem}$ the right hand side of the last inequality goes to $0$ as $t \to \infty$. 
\end{proof}

\section{An upper bound for the average determining wavenumber} \label{sec:Kolmogorov}
In this section we develop a bound for the determining wavenumber $\lb_u$ in the time averaged sense. In view of the definition $\eqref{def_det_wave}$ and $\eqref{def_uni_det_wave}$, we can fix a tuple $(r,\d) \in \Ss$ and develop a uniform bound for the wavenumber $\lb^{r,\d}_u$. 
\begin{Lemma} \label{L:Lambda-main-estimate}
Let $(r,\d) \in \Ss$ and $\lb^{r,\d}_u$ be the wavenumber defined as in $\eqref{def_det_wave}$, if $\l_0 \leq \lb^{r,\d}_u < \infty$ we have that\\
\begin{equation} \label{eq:Lambda-main-estimate1}
c_{r,\delta} \nu(\lb^{r,\d}_u)^2 \lesssim  \|\nabla u_{\leq Q-1}\|_\infty + \sup_{p\geq Q}  (L\l_{p-Q})^{\sigma+\frac{3}{r}}(\lb^{r,\d}_u)^{1+\frac{3}{r}} \|u_p\|_r.
\end{equation}\\
and if $\lb^{r,\d}_u=\infty$, then
\begin{equation}\label{eq:Lambda-main-estimate2}
\sup_q \l_q^{\sigma+\frac{3}{r}} \|u_q\|_r = \infty.
\end{equation}\\
where $\sigma = \frac{\delta-1}{2}$, $Q \in \N$ is such that $\lb^{r,\d}_u = \l_Q$ and $c_{r,\d}$ is given in $\eqref{def_c_rd}$.
\end{Lemma}
\begin{Remark}\label{Rmk:Lambda-main-estimate}
The above lemma indicates that although the wavenumber $\lb^{r,\d}_u$ may not have a pointwise in time upper bound, a blow-up of the wavenumber implies a blow-up in higher mode of the solutions, i.e. a loss of regularity. Later, we will seek for a time-averaged bound for the wavenumber $\lb^{r,\d}_u$, uniformly on $(r,\d) \in \Ss$, and hence developing a time-averaged bound for the determining wavenumber $\lb_u$. 
\end{Remark}

\begin{proof}[Proof of lemma \ref{L:Lambda-main-estimate}]
Fix $(r,\d) \in \Ss$. Suppose that $\lb_u^{r,\d}=\infty$, by definition $\eqref{def_det_wave}$ we have that for $q\in \mathbb{N}$ either
\begin{equation} \label{1st-cond-in-L}
\sup_{p>q} (L\l_{p-q})^\sigma \l_{p}^{\frac{3}{r}}\l_{q}^{-1}\|u_p\|_{r} > c_{r,\delta}\nu ,
\end{equation}
or
\begin{equation} \label{2nd-cond-in-L}
\lambda_q^{-2}\|\nabla u_{\leq q}\|_{\infty} >  c_{r,\delta}\nu.
\end{equation}
\\
The first lower bound from the above implies that
\[
\limsup_{q\to \infty} \sup_{p>q} \l_q^{-\sigma-1} \l_{p}^{\sigma+\frac{3}{r}}\|u_{p}\|_{r} > c_{r,\delta}\nu.
\]
hence $\sup_q \l_q^{\sigma+\frac{3}{r}} \|u_q\|_r = \infty$, since $\sigma \geq - \haf$.\\
\\
The inequality \eqref{2nd-cond-in-L} leads to 
\[
\limsup_{q\to \infty} \lambda_q^{-2}\|\nabla u_{\leq q}\|_{\infty} > c_{r,\delta}\nu.
\]
On the other hand, since $\sigma \leq 1$, we obtain
\[
\begin{split}
\lambda_q^{-2}\|\nabla u_{\leq q}\|_{\infty} &\lesssim \l_q^{-2}\sum_{p\leq q} \l^{1+\frac{3}{r}}_p\|u_p\|_r\\
&=\l_q^{-\sigma-1}\sum_{p\leq q} (L\lambda_{q-p})^{\sigma-1}\l_p^{\sigma+\frac{3}{r}}\|u_p\|_r\\
&\lesssim q\l_q^{-\sigma-1} \sup_{p\leq q} \l_p^{\sigma+\frac{3}{r}}\|u_p\|_r.
\end{split}
\]
Hence,
\[
q^{-1}\l_q^{\sigma + 1} c_{r,\d} \nu \lesssim \sup_{p\leq q} \l_p^{\sigma+\frac{3}{r}}\|u_p\|_r 
\]
and again, sending $q \to \infty$ yields $\eqref{eq:Lambda-main-estimate2}$.
\\\\
Suppose now $\l_0 \leq \lb_u^{r,\d}(t) <\infty$,
then 
\begin{equation*}
\sup_{p>Q} (L\l_{p-Q})^\sigma \l_{p}^{\frac{3}{r}}\l_{Q}^{-1}\|u_p\|_{r} < c_{r,\delta}\nu ,
\end{equation*}
and 
\begin{equation*} 
\lambda_Q^{-2}\|\nabla u_{\leq Q}\|_{\infty} <  c_{r,\delta}\nu.
\end{equation*}
i.e. both conditions are satisfied in the definition $\eqref{def_det_wave}$ for $q = Q$. However, for $q = Q-1$, we have either
\begin{equation}\label{eq:alt1}
(L\l_{p-Q+1})^{\sigma}\l^{\frac{3}{r}}_{p}\l_{Q-1}^{-1}\|u_p\|_r > c_{r,\delta}\nu, \ \ \text{for some} \ \ p > Q -1,
\end{equation}
or
\begin{equation} \label{eq:alt2}
 \|\nabla u_{\leq Q-1}\|_\infty > c_{r,\delta} \nu\lambda_{Q-1}^2
\end{equation}
Rearranging the above inequalities we get either
\[
c_{r,\delta}\nu (\lb_u^{r,\d})^2 <  4 (L\l_{p-Q})^{\sigma+\frac{3}{r}}(\lb_u^{r,\d})^{1+\frac{3}{r}} \|u_p\|_r, \ \ \text{for some} \ \ p > Q-1, 
\]
or
\[
c_{r,\delta}\nu (\lb_u^{r,\d})^2 < 4 \|\nabla u_{\leq Q-1}\|_\infty.
\]\\
Adding the above and take supremum over $p > Q-1$ we obtain \eqref{eq:Lambda-main-estimate1}.
\end{proof}
As discussed in remark \ref{Rmk:Lambda-main-estimate}, consider now the average in time wavenumber
\[
\<\lb^{r,\d}_u\> (t):= \frac{1}{T}\int_t^{t+T} \lb_u^{r,\d}(\tau) \, d\tau,
\]
with some fixed $T>0$. Additionally, recall from $\eqref{eq:kdeps-inermit}$ the Kolmorgorov's dissipation number:
\[
\kappa_\mathrm{d} (t) = \left(\frac{\varepsilon }{\nu^3} \right)^{\frac{1}{d+1}}, \qquad  \varepsilon (t)= \nu\l_0^d\<\|\nabla u\|_2^2\> =  \frac{\nu \l_0^d}{T} \int_{t}^{t+T} \|\nabla u (\tau)\|_2^2 d\tau 
\]
with the intermittency dimension $d$ defined in $\eqref{eq:intermdef}$:
\[
d:= \sup\left\{s\in [0,3]: r \in [2,\infty], \left<\sum_{q\leq Q(t)}\l_q^{-1+\frac{6}{r}+s(1-\frac{2}{r})} \|u_q\|_r^2 \right> \lesssim  \l_0^{s(1-\frac{2}{r})}\left<\sum_{q \leq Q(t)}\l_q^{2} \|u_q\|_2^2 \right> \right\}
\]
The next theorem provide a uniform bound over the whole intermittency region $[0,3]$ on the average in time dissipation wavenumber $\<\lb_u\>(t)$.

\begin{Theorem}\label{Thm:avg_detwave_bound}
Let $d$ be the intermittency dimension and $\k_d$ be the Kolmogorov's dissipation wavenumber defined in $\rf{eq:intermdef}$ and $\rf{eq:kdeps-inermit}$ repsectively. The average determining wavenumber $\< \lb_u\> (t)$ has the following bound: 
\begin{enumerate}[label=\slshape(\roman*)]
  \item \label{avg_detwave_b1} For any $d \in [0,3]$, we have that
  \begin{equation*}
      \<\lb_u \> - \l_0 \lesssim \kappa_d \min \Big \{(\ln{(\l^{-1}_0\kappa_d)})^2, \frac{1}{d^2}\Big \},  
  \end{equation*}
  \item \label{avg_detwave_b2} Furthermore, if $d \in \big[ 0, (\ln{(\l^{-1}_0\kappa_d)})^{-1}\big] $, we have that \\
  \begin{equation*} 
      \<\lb_u \> - \l_0 \lesssim \kappa_d 
  \end{equation*}
\end{enumerate}
\end{Theorem}
\begin{Remark}
    Notice that the absolute constant appears in the bound does not depend on the value of $d$. Theorem $\ref{Thm:avg_detwave_bound}$ generalizes and improves the previous results on the bound of the average determining wavenumber in \cite{CD1} and \cite{CDK}. Part \ref{avg_detwave_b1} improves the bound on \cite{CD1} by removing the logarithmic correction for $d = 3$ and the (fixed)threshold constant $\d > 0$. Part \ref{avg_detwave_b2} agree with the bound in \cite{CDK} in the case of the extreme intermittency $d = 0$, and extend the intermittency range of the optimal bound from a single point to a small region, which depends on the size of $\k_d$. 
\end{Remark}

\begin{proof}
For any tuple $(r,\d) \in \Ss$, if $u$ is a solution, then by definition $\rf{def_uni_det_wave}$ we know that $\lb_u(t) \leq \lb^{r,\d}_u(t)$. Our goal is to develop an upper bound for $\lb^{r,\d}_u(t)$ with tuples $(r,\d) \in \Ss$ satisfying 
\begin{equation}\label{eq:additional_condition_rd}
d = \frac{\d}{1-\frac{2}{r}} \ \ \text{and}  \ \ \d < 3
\end{equation}
In another word, we set $r$ to be finite(If we let $r = \infty$, i.e. $d = \d$, then there will be a logarithmic correction when $d = 3$, see \cite{CD1}). In the following, for simplicity we will use $\lb(t)$ for $\lb_u^{r,\d}(t)$. Consider the case when $\lb(t)$ is finite, we can then use \eqref{eq:Lambda-main-estimate1}
in Lemma~\ref{L:Lambda-main-estimate} and Jensen's
inequality to obtain 
\[
\begin{split}
&(\lb(t) - \l_0)^{1+d(1-\frac{2}{r})}
 \lesssim  \frac{\lb^{d(1-\frac{2}{r})-3}}{(c_{r,\delta} \nu)^2}\left( \|\nabla u_{\leq Q-1}\|_\infty^2 + \sup_{q\geq Q}  (L\l_{q-Q})^{2(\sigma+\frac{3}{r})} \lb^{2(1+\frac{3}{r})}\|u_q\|_r^2\right)\\
&\lesssim  \frac{1}{(c_{r,\delta} \nu)^2}\left(\sum_{q\leq Q-1} \l_q^{\frac{-1+\frac{6}{r}+d(1-\frac{2}{r})}{2}}\|u_q\|_r (L\lambda_{Q-q})^{\frac{(d(1-\frac{2}{r})-3)}{2}} \right)^2
+ \frac{\lb^{-1+\frac{6}{r}+d(1-\frac{2}{r})}}{(c_{r,\delta}\nu)^2}\sup_{q\geq Q}  (L\l_{q-Q})^{2(\sigma+\frac{3}{r})} \|u_q\|_r^2\\
&\lesssim \frac{1}{(c_{r,\delta} \nu)^2} \sum_{q\leq Q-1} \l_q^{-1+\frac{6}{r}+d(1-\frac{2}{r})}\|u_q\|_r^2 + \frac{1}{(c_{r,\delta} \nu)^2}\sup_{q\geq Q}  (L\l_{q-Q})^{2\sigma-d(1-\frac{2}{r})+1} \l_q^{-1+\frac{6}{r}+d(1-\frac{2}{r})}\|u_q\|_r^2\\
&\lesssim \frac{1}{(c_{r,\delta}\nu)^2} \sum_{q=-1}^\infty \l_q^{-1+\frac{6}{r}+d(1-\frac{2}{r})}\|u_q\|_r^2.
\end{split}
\]
where we used the fact that $(r,\d)$ satisfies $\eqref{eq:additional_condition_rd}$ and hence $d(1-\frac{2}{r})-3 < 0$.
Note that if $\lb=\infty$, the inequality above is still true. This can be observed by $\eqref{eq:Lambda-main-estimate2}$ in lemma \ref{L:Lambda-main-estimate}
\[
\sum_{q} \l_q^{-1+\frac{6}{r}+d(1-\frac{2}{r})}\|u_q\|_r^2 = \sum_{q} \l_q^{\d-1+\frac{6}{r}}\|u_q\|_r^2 = \sum_{q} \l_q^{2\sigma+\frac{6}{r}}\|u_q\|_r^2 =\infty
\]
Now taking the time average integral and using the definition of the intermittency dimension $\eqref{eq:intermdef}$ yields
\[
\begin{split}
\<\lb\>-\l_0
& \lesssim \<(\lb - \l_0)^{1+d(1-\frac{2}{r})}\>^{\frac{1}{1+d(1-\frac{2}{r})}}\\
&\lesssim \left\<\frac{1}{(c_{r,\delta}\nu)^2} \sum_{q=-1}^\infty \l_q^{-1+\frac{6}{r}+d(1-\frac{2}{r})}\|u_q\|_r^2 
\right\>^{\frac{1}{1+d(1-\frac{2}{r})}}\\
&\lesssim \left\<\frac{\l_0^{d(1-\frac{2}{r})}}{(c_{r,\delta}\nu)^2} \sum_{q=-1}^\infty \l_q^{2}\|u_q\|_2^2 \right\>^{\frac{1}{1+d(1-\frac{2}{r})}}\\
&\lesssim \left\< \frac{\l_0^d\l^{-\frac{2d}{r}}_0}{c^2_{r,\delta}\nu^2}\sum_{q=-1}^\infty \l_q^{2}\|u_q\|_2^2 \right\>^{\frac{1}{1+d(1-\frac{2}{r})}}\\
&\lesssim  \left\< \frac{\l^{-\frac{2d}{r}}_0\nu \l_0^d}{c^2_{r,\delta}\nu^3 }\|\nabla u\|_2^2 \right\>^{\frac{1}{1+d(1-\frac{2}{r})}}\\
&=\l_0 \bigg(\frac{(\l^{-1}_0\kappa_d)^{d+1}}{c^2_{r,\delta}}\bigg)^{\frac{1}{1+d(1-\frac{2}{r})}}
\end{split}
\]
Recall that $c_{r,\delta}$ is explicitly written as $c_{r,\delta} = \frac{1}{2C_{r,\d}} = \frac{1}{2c_0}\bigg [ 1+\big(1-2^{s-\sigma-\frac{3}{r}}\big)^{-2} \bigg ]^{-1} $. It can then be shown that 
\bg \notag
c^{-1}_{r,\d} \lesssim \big (1-2^{-\frac{\d}{2}-\frac{3}{r}} \big)^{-2}
\ed 
for all $(r,\d)\in \Ss$ satisfying the condition $\eqref{eq:additional_condition_rd}$. We henceforward define the quantity
\bg \label{avg_detwave_bound}
M_d(r,\d) := \l_0 \big (1-2^{-\frac{\d}{2}-\frac{3}{r}} \big)^{-\frac{4}{1+d(1-\frac{2}{r})}} \big(\l_0^{-1}\kappa^{d+1}_d \big)^{\frac{1}{1+d(1-\frac{2}{r})}}
\ed
and so
\bg \notag
\<\lb\>-\l_0 \lesssim M_d(r,\d) 
\ed
for $(r,\d) \in \Ss$. We seek for $(r,\d)$ within the constraint of $\eqref{eq:additional_condition_rd}$ minimizing the size of $M_d(r,\d)$, and the key is to balance the size of $c^{-1}_{r,\d}$ and $\k^{\frac{d+1}{1+d(1-\frac{2}{r})}}_d$. To this end, consider the choice $r = 2\k^{\frac{d}{3}}_d$, and from which the values of $\d$ is determined (without loss of generality we assume that $\l_0 = 1$ for the time being)
\bg \notag
 \d = d \big( 1-\frac{2}{r} \big) = d \big( 1 - \k^{-\frac{d}{3}}_d \big )   
\ed
Denote the tuple $(r_m, \d_m) := \Big( 2\k^{\frac{d}{3}}_d, d \big( 1 - \k^{-\frac{d}{3}}_d \big )\Big )$, we obtain that 
\begin{align*}
    \<\lb\>-\l_0 &\lesssim M_d(r_m,\d_m) \lesssim \big (1-2^{-\frac{d}{2}-\frac{3-d}{r_m}} \big)^{-2} \k^{\frac{d+1}{1+\d_m}}_d  \\\\
    & = \Big (1-2^{-\frac{d}{2}-\frac{3-d}{r_m}} \Big)^{-2} \k^{\frac{d-\d_m}{1+\d_m}}_d \k_d
\end{align*}
We proceed by estimating the first two terms of the above. Observe that for any $d \in [0,3]$:
\[
\k^{\frac{d-\d_m}{1+\d_m}}_d \lesssim \k^{d\k^{-\frac{d}{3}}_d}_d \lesssim 1
\]
To see this, consider the quantity 
\[
Y(d) := \ln{\k^{d\k^{-\frac{d}{3}}_d}_d} = d\k^{-\frac{d}{3}}_d \ln{\k_d}
\]
and it suffices to compute the maximum of $Y$. The maximizing value $d = 3 (\ln{\k_d})^{-1}$ gives 
\bg \notag
Y \leq \frac{3}{e}
\ed
and so 
\[
\k^{d\k^{-\frac{d}{3}}_d}_d \leq e^\frac{3}{e}
\]
As for the first term, note that
\[
1-2^{-\frac{d}{2}-\frac{3-d}{r_m}} \gtrsim \frac{d}{2}\ln{2}
\] 
Therefore if $d > (\ln{\k_d})^{-1}$, \\
\[
\Big (1-2^{-\frac{d}{2}-\frac{3-d}{r_m}} \Big)^{-2} \lesssim \frac{1}{d^2} \lesssim (\ln{\k_d})^2
\]
which established the bound in \ref{avg_detwave_b1}. To prove \ref{avg_detwave_b2}, consider $0 \leq d \leq (\ln{\kappa_d})^{-1}$, then

\[
-\frac{d}{2}-\frac{3-d}{r_m} = -\frac{d}{2} - \frac{3-d}{2\k^{\frac{d}{3}}_d} \lesssim -\frac{3}{2e^{\frac{1}{3}}} 
\]
hence 
\[
\Big (1-2^{-\frac{d}{2}-\frac{3-d}{r_m}} \Big)^{-2} \lesssim \Big (1-2^{-\frac{d}{2}-\frac{(3-d)\k^{-\frac{d}{3}}_d}{2}} \Big)^{-2} \lesssim 1
\]\\
which proved \ref{avg_detwave_b2}.

\end{proof}



\end{document}